\theoremstyle{plain}
\newtheorem{Theorem}{Theorem}
\newtheorem{Lemma}{Lemma}
\newtheorem{Proposition}{Proposition}
\newtheorem*{Remark*}{Remark}
\newtheorem{Condition}{Condition}
\newtheorem*{Corollary*}{Corollary}
\begin{document}
\date{}
\author{
M. I. Belishev
\thanks {
St. Petersburg Department of V. A. Steklov Institute of Mathematics of the Russian Academy of Sciences,
27 Fontanka, St. Petersburg, 191023 Russia
belishev@pdmi.ras.ru.
The work was supported by the grant RFBR 17-01-00529A.},\,
S. A. Simonov
\thanks {
St. Petersburg Department of V. A. Steklov Institute of Mathematics of the Russian Academy of Sciences, 27 Fontanka, St. Petersburg, 191023 Russia;
St. Petersburg State University, 7--9 Universitetskaya nab., St. Petersburg, 199034 Russia,
sergey.a.simonov@gmail.com.
The work was supported by the grants RFBR 18-31-00185mol\_a, 17-01-00529A, 16-01-00443A and 16-01-00635A.}
}
\title{The wave model of metric spaces}

\maketitle
\begin{abstract}
Let $\Omega$ be a metric space, $A^t$ denote the metric neighborhood of the set $A\subset\Omega$ of the radius $t$; ${\mathfrak O}$ be the lattice of open sets in $\Omega$ with the partial order $\subseteq$ and the order convergence. The lattice of  $\mathfrak O$-valued functions of $t\in(0,\infty)$ with the point-wise partial order and convergence contains the family ${I\mathfrak O}=\{A(\cdot)\,|\,\,A(t)=A^t,\,\,A\in{\mathfrak O}\}$. Let $\widetilde\Omega$ be the set of atoms of the order closure $\overline{I\mathfrak O}$. We describe a class of spaces for which the set $\widetilde\Omega$, equipped with an appropriate metric, is isometric to the original space $\Omega$.

The space  $\widetilde\Omega$ is the key element of the construction of the {\it wave spectrum} of a symmetric operator semi-bounded from below, which was introduced in a work of one of the authors. In that work, a program of constructing a functional model of operators of the aforementioned class was devised. The present paper is a step in realization of this program.
\end{abstract}

\noindent{\bf Keywords:} metric space, lattice of open subsets, isotony, lattice-valued functions, atoms, wave model.

\noindent{\bf AMS MSC:} 34A55, 47-XX, 47A46, 06B35.

\setcounter{section}{-1}

\section{Introduction}\label{sec Introductioneq 2 lower boundeq 2 lower boundeq 2 lower bound}
In the paper \cite{B JOT}, a program of constructing a new functional model of sym\-met\-ric semi-bounded operators was devised. This model was named {\it the wave model}, which is motivated by its origin in inverse problems of mathematical physics. In the works \cite{BSim_1, Simonov 2017} its systematic investigation has been started. The key element of the wave model is {\it the wave spectrum} --- a unitary invariant of a symmetric semi-bounded operator \cite{B JOT}. This is a topological space determined by the operator. As it turned out, the fundamental problem of reconstruction of a Riemannian manifold from the boundary spectral and dynamical data \cite{B_UMN} can be reduced to finding the wave spectrum of the minimal Laplacian corresponding to the manifold. This spectrum, equipped with an appropriate metric, firstly, is determined by the inverse problem data, and secondly, turns out to be isometric to the manifold that has to be recovered. Thus it solves the problem.

The subject of the present paper is general properties of the wave spectrum as a set-theoretic construction, and its possible structure. Here we study them separately, without connection to their origin --- semi-bounded operators. The results of the paper, in our opinion, give reason to speak of the wave spectrum as a new and rich in content attribute of this important class of operators.

Relatively simple facts are placed into Propositions, their proofs are not included. In the text some inaccuracies from \cite{B JOT} were corrected. We are grateful to the referee for helpful remarks on the text.

\section{The lattice $\mathfrak O$}
Everywhere in the paper, $(\Omega,d)$ is a complete metric space. Definitions and terminology are taken mostly from \cite{Birkhoff} and \cite{Vulih}.
\smallskip

\noindent$\bullet$\,\,\, The metric neighborhood of a set $A\subset\Omega$ of the radius $t$ is the set
\begin{equation*}\label{eq 0 metric neighborhood}
A^t:=\{x\in\Omega\,|\,\, d(x,A)<t\},\qquad t>0;
\end{equation*}
let us also denote $\emptyset^t:=\emptyset$.  By ${\rm int}A$ we denote the set of inner points of $A$; $\overline A$ is the closure of $A$ in $\Omega$. We note that $A^t=\overline A^{\,t}$ for $t>0$.

Let $B_r(x):= \{y\in \Omega\ |\ d(x,y)<r\}$ and $B_r[x]:= \{y\in\Omega\ |\ d(x,y)\leqslant r\}$ be the open and the closed balls centered at $x\in\Omega$ and with the radius $r>0$.
\smallskip

\noindent$\bullet$\,\,\, Let $\mathfrak O$ be the lattice of open sets in $\Omega$ with the partial order $\subseteq$ and the operations $G_1\vee G_2= G_1\cup G_2$ and $G_1\wedge G_2=G_1\cap G_2$. It is {\it complete}: every family of sets $G_\alpha$ in it has the least upper bound $\bigcup_\alpha G_\alpha$ and the greatest lower bound ${\rm int}\bigcap_\alpha G_\alpha$. In particular, the lattice $\mathfrak O$ has the least and the greatest elements ---
the sets $\emptyset$ and $\Omega$, respectively.

In the lattice $\mathfrak O$ one can introduce {\it the order convergence} \cite{Birkhoff}. Let $\{G_{\alpha}\}$ be a net in $\mathfrak O$; then by definition
$$
G_{\alpha}  \overset{o}\to   G \text{, if}\quad\underset{\alpha}{\rm sup}\,{\underset{\beta>\alpha}{\rm inf}G_\beta} =\underset{\alpha}{\rm inf}{\,\underset{\beta>\alpha}{\rm sup}\,G_\beta} = G\,,
$$

$$
G_{\alpha} \overset{o}\to   G \text{, if}\quad \bigcup_{\alpha}\text{int}\bigcap_{\beta>\alpha}G_{\beta}   = {\rm int}\bigcap_{\alpha}\bigcup_{\beta>\alpha}G_{\beta}   =   G.
$$
The net $\{G_\alpha\}$ grows (decreases), if $\alpha\geqslant\beta$ implies $G_\alpha\geqslant G_\beta$\, ($G_\alpha\leqslant G_\beta$). Let us note a simple fact.

\begin{Proposition}\label{prop 1 monotonic}
Every growing (decreasing) net $\{G_{\alpha}\}$ of sets from $\mathfrak O$ converges in order to its greatest lower bound ${\rm int}\bigcap\limits_{\alpha}G_{\alpha}$ (to its least upper bound $\bigcup\limits_{\alpha}G_{\alpha}$).
 \end{Proposition}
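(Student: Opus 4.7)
The plan is to verify both halves of the order-convergence definition directly, exploiting the fact that for a monotone net the tails collapse. For concreteness I would carry out the growing case in detail, since the decreasing case follows by the evident duality (swapping $\subseteq$ and $\supseteq$, and the meet and join of $\mathfrak{O}$).

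Fix a growing net $\{G_\alpha\}$ and set $G := \bigcup_\alpha G_\alpha$. For the $\limsup$ side, I first observe that for every index $\alpha$,
$$
\bigcup_{\beta > \alpha} G_\beta \;=\; G,
$$
because any $G_\gamma$ is contained in some $G_\delta$ with $\delta > \alpha$ (pick $\delta$ dominating both $\alpha$ and $\gamma$, then apply monotonicity). Therefore $\mathrm{int}\bigcap_\alpha \bigcup_{\beta>\alpha} G_\beta = \mathrm{int}\,G = G$, since $G$ is already open.

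For the $\liminf$, monotonicity gives $G_\alpha \subseteq G_\beta$ whenever $\beta > \alpha$, so $G_\alpha \subseteq \bigcap_{\beta>\alpha} G_\beta$; because $G_\alpha$ is open, in fact $G_\alpha \subseteq \mathrm{int}\bigcap_{\beta>\alpha} G_\beta$. Taking the union over $\alpha$ yields $G \subseteq \bigcup_\alpha \mathrm{int}\bigcap_{\beta>\alpha} G_\beta$. The reverse containment is the crude bound $\mathrm{int}\bigcap_{\beta>\alpha} G_\beta \subseteq G_\gamma \subseteq G$ for any fixed $\gamma > \alpha$, which keeps the outer union inside $G$. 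Combining the two inclusions, both lim inf and lim sup equal $G$, hence $G_\alpha\overset{o}\to G$.

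I do not anticipate any serious obstacle; the only mild subtlety is the presence of the interior operator in the meet of $\mathfrak{O}$, which is dispatched by the fact that each $G_\alpha$ is open. The decreasing case is strictly dual: one verifies $\bigcap_{\beta>\alpha} G_\beta = \bigcap_\beta G_\beta$ for every $\alpha$ to collapse the lim inf to $\mathrm{int}\bigcap_\beta G_\beta$, and uses $\bigcup_{\beta>\alpha} G_\beta \subseteq G_\alpha$ together with $\bigcap_\gamma G_\gamma \subseteq \bigcup_{\beta>\alpha} G_\beta$ to show that the lim sup equals $\mathrm{int}\bigcap_\alpha G_\alpha$ as well.
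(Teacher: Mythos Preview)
Your argument is correct and is exactly the routine verification one would expect; the paper itself omits the proof entirely (it states that ``Relatively simple facts are placed into Propositions, their proofs are not included''), so there is nothing to compare against. Note only that the parentheticals in the statement are transposed --- a growing net converges to its least upper bound $\bigcup_\alpha G_\alpha$ and a decreasing one to its greatest lower bound $\mathrm{int}\bigcap_\alpha G_\alpha$ --- and you have (rightly) proved the corrected version, which is also how the paper uses the proposition later (e.g., in the proof of Lemma~\ref{lem 3 segment non-empty}).
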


Let  ${\cal F}\subset\mathfrak O$ be a family of open subsets. Denote the set of the limits of all the order-convergent nets from ${\cal F}$ by $\overline{\cal F}$ and call the operation ${\cal F}\mapsto \overline{\cal F}$ {\it the order closure}\footnote{We note that, generally speaking, $\overline{\overline {\cal F}}\not=\overline {\cal F}$: cf. \cite{Vulih}. Nevertheless, the $o$-convergence determines a topology in $\mathfrak O$. In it, the families with the property $\mathcal F=\overline{\mathcal F}$ are closed by definition.}.

\section{The set $\overline{I\mathfrak O}$}
\noindent$\bullet$\,\,\,Consider the family ${\mathfrak F}:=F\left((0,+\infty);{\mathfrak O}\right)$ of functions on the half-line taking values in the lattice $\mathfrak O$. It is easy to see that $\mathfrak F$ is a complete lattice with respect to {\it the point-wise order}
$$
f\leqslant g  \Leftrightarrow f(t)\subseteq g(t), \quad  t>0,
$$
which determines the operations
$$
(f\vee g)(t):=f(t)\vee g(t),\quad(f\wedge g)(t):=f(t)\wedge g(t),\quad t> 0.
$$
There exist the least and the greatest elements $0_{\mathfrak F}(\cdot)\equiv\emptyset$ and $1_{\mathfrak F}(\cdot)\equiv\Omega$. The order convergence in ${\mathfrak F}$ coincides with {\it point-wise order convergence}:
$$
f_\alpha\overset{o}\to f\,\,\Leftrightarrow\,\,f_\alpha(t)\overset{o}\to f(t),\quad t> 0\,.
$$

\noindent$\bullet$\,\,\,A map between two partially ordered sets $i:{\mathscr P}\to{\mathscr Q}$ is called isotonic ({\it an isotony}), if it preserves the order, i. e., $p\leqslant q$ implies $i(p)\leqslant i(q)$\, \cite{Birkhoff}. We call {\it the metric isotony} the map $I:{\mathfrak O}\to{\mathfrak F}$, $(IG)(t):=G^t,\,\,\,t> 0$. Note that $I\emptyset=0_{\mathfrak F}$ according to the above definitions.

Consider the image of the whole lattice
\begin{equation*}\label{eq 2 IO}
I\mathfrak O   =   \{g\ |\ g(t)=G^t,\,\,G\in\mathfrak O,\,\,\,\,t>0\}\, \subset \,  \mathfrak F\,.
\end{equation*}
Its order closure $\overline{I\mathfrak O}$ in the lattice $\mathfrak F$ is of special interest to us. Note that, generally speaking, it is not a lattice. Let us list some of the properties of its elements.

\begin{Proposition}\label{prop 2 convergence 2 monotonic + convergence}
All the elements of $\overline{I\mathfrak O}$ are growing functions. For every $g   \in   \overline{I\mathfrak O}$ there exists a decreasing net $\{G_{\alpha}\}$ in $\mathfrak O$ such that $IG_{\alpha}   \overset{o}\to   g$ in $\mathfrak F$. Furthermore, $g(t)   = {\rm int}\bigcap\limits_{\alpha}G_{\alpha}^t$ for every $t>0$.
\end{Proposition}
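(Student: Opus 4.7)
For the first claim, I would argue in two steps. Each $IG \in I\mathfrak O$ is itself growing, since $G^{t_1}\subseteq G^{t_2}$ whenever $t_1\leq t_2$, directly from the definition of metric neighborhood. Since order convergence in $\mathfrak F$ is point-wise, this property is preserved under passage to the limit: if $IH_\alpha \overset{o}\to g$, then the formula $g(t) = \mathrm{int}\bigcap_\alpha \bigcup_{\beta>\alpha} H_\beta^t$ evaluated at $t_1$ and $t_2$, combined with the inclusions $H_\beta^{t_1}\subseteq H_\beta^{t_2}$, yields $g(t_1)\subseteq g(t_2)$ by monotonicity of unions, intersections, and the interior.

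For the remaining two claims, I would fix $g\in \overline{I\mathfrak O}$ and a net $\{H_\alpha\}_{\alpha\in\mathcal A}\subset \mathfrak O$ witnessing $IH_\alpha\overset{o}\to g$, and set
\[
G_\alpha := \bigcup_{\beta\geq\alpha} H_\beta.
\]
Each $G_\alpha$ is open (a union of opens), and $\{G_\alpha\}$ is decreasing in $\alpha$ because enlarging $\alpha$ shrinks the indexing set $\{\beta\geq\alpha\}$. The central distributive identity I will invoke is
\[
G_\alpha^t = \bigcup_{\beta\geq\alpha} H_\beta^t,
\]
which follows from $d(x,\bigcup_i A_i) = \inf_i d(x,A_i)$; in particular $\{G_\alpha^t\}$ is a decreasing net in $\mathfrak O$ for each $t>0$.

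Proposition~\ref{prop 1 monotonic} applied to $\{G_\alpha^t\}$ then gives $G_\alpha^t \overset{o}\to \mathrm{int}\bigcap_\alpha G_\alpha^t$. To finish, I identify this limit with $g(t)$: the hypothesis $IH_\alpha\overset{o}\to g$ supplies $g(t) = \mathrm{int}\bigcap_\alpha \bigcup_{\beta>\alpha} H_\beta^t$, while by construction $\mathrm{int}\bigcap_\alpha G_\alpha^t = \mathrm{int}\bigcap_\alpha \bigcup_{\beta\geq\alpha} H_\beta^t$; a routine cofinality argument in the directed set $\mathcal A$ shows these two agree. This yields simultaneously the point-wise convergence $IG_\alpha\overset{o}\to g$ and the explicit formula $g(t) = \mathrm{int}\bigcap_\alpha G_\alpha^t$.

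The main subtlety I anticipate is the small cofinality bookkeeping between $\beta>\alpha$ and $\beta\geq\alpha$ in the two competing expressions for the limit; once made explicit this is routine. The rest is a direct application of the distributive identity for metric neighborhoods over unions together with the monotone convergence of Proposition~\ref{prop 1 monotonic}.
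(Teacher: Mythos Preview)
Your proposal is correct and follows essentially the same route as the paper's own sketch: starting from an arbitrary approximating net $\{\hat G_\alpha\}$ (your $\{H_\alpha\}$), form the decreasing net of tail unions and use the distributive identity $\bigl(\bigcup_\beta \hat G_\beta\bigr)^t=\bigcup_\beta \hat G_\beta^{\,t}$ to transfer convergence. The only cosmetic difference is that the paper takes $G_\alpha:=\bigcup_{\beta>\alpha}\hat G_\beta$ (strict inequality), which makes the identification $g(t)=\mathrm{int}\bigcap_\alpha G_\alpha^t$ immediate from the very definition of the $o$-limit and removes the $\beta>\alpha$ versus $\beta\geq\alpha$ bookkeeping you flag at the end.
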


Indeed, the order closure $\overline{I\mathfrak O}$ consists of functions from $\mathfrak F$ which are limits of at least one net from $I\mathfrak O$. For $g   \in
\overline{I\mathfrak O}$ and a net $\{\hat G_{\alpha}\}$ in $\mathfrak O$ such that $I\hat G_{\alpha} \overset o\to g$ one can take $G_{\alpha} :=\bigcup\limits_{\beta>\alpha}\hat G_{\beta}$. This is a decreasing net and, besides that, $G_{\alpha}^t =\bigcup\limits_{\beta>\alpha}\hat G_{\beta}^t$. The remaining assertions of the Proposition can be also easily checked. We add that there can be no {\it growing} net which converges to $g$.

\begin{Lemma}\label{lem 2 kernel}
Let $g\in\overline{I\mathfrak O}$ and $\{G_{\alpha}\}$ be a decreasing net in $\mathfrak O$ such that $IG_{\alpha} \overset o\to g$. Then
$\bigcap\limits_{t>0}g(t)   =
\bigcap\limits_{t>0}\overline{g(t)}   =
\bigcap\limits_{\alpha}\overline{G_{\alpha}}$.
\end{Lemma}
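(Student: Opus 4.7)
The plan is to prove the double equality by a cyclic chain of inclusions
\[
\bigcap_{t>0} g(t) \,\subseteq\, \bigcap_{t>0} \overline{g(t)} \,\subseteq\, \bigcap_\alpha \overline{G_\alpha} \,\subseteq\, \bigcap_{t>0} g(t).
\]
The first inclusion is immediate from $g(t)\subseteq\overline{g(t)}$. For the remaining two I will rely on the representation $g(t)={\rm int}\bigcap_\alpha G_\alpha^t$ supplied by Proposition 2.

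For the middle inclusion, take $x\in\overline{g(t)}$ for every $t>0$. Since $g(t)\subseteq G_\alpha^t$ for each $\alpha$, we have $\overline{g(t)}\subseteq\overline{G_\alpha^t}$, and a direct triangle-inequality check gives $\overline{G_\alpha^t}\subseteq\{y:d(y,\overline{G_\alpha})\leqslant t\}$ (any limit of points at distance $<t$ from $G_\alpha$ is at distance $\leqslant t$). Hence $d(x,\overline{G_\alpha})\leqslant t$ for every $t>0$, which forces $x\in\overline{G_\alpha}$; as this holds for every $\alpha$, we obtain $x\in\bigcap_\alpha\overline{G_\alpha}$.

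For the third inclusion, suppose $x\in\overline{G_\alpha}$ for every $\alpha$, so $d(x,G_\alpha)=0$ uniformly in $\alpha$. Fix $t>0$ and choose any $\varepsilon\in(0,t)$. For $y\in B_\varepsilon(x)$ the triangle inequality gives $d(y,G_\alpha)\leqslant d(y,x)<\varepsilon<t$, so $B_\varepsilon(x)\subseteq G_\alpha^t$ for every $\alpha$. Intersecting in $\alpha$ and using that $B_\varepsilon(x)$ is open, we conclude $x\in{\rm int}\bigcap_\alpha G_\alpha^t = g(t)$, and varying $t$ gives $x\in\bigcap_{t>0}g(t)$.

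The step that will demand the most care is this last inclusion: one must land not merely in $\bigcap_\alpha G_\alpha^t$ but in its \emph{interior}. What makes this work is that the bound $d(x,G_\alpha)=0$ is uniform across $\alpha$ — a direct consequence of intersecting the \emph{closures} $\overline{G_\alpha}$ rather than arbitrary nested closed sets — which yields a single ball $B_\varepsilon(x)$ lying inside every $G_\alpha^t$ simultaneously. Everything else is straightforward bookkeeping with the open $t$-neighborhood and the representation of $g$ from Proposition 2.
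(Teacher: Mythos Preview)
Your proof is correct and follows essentially the same route as the paper's: the same cyclic chain of inclusions, the same use of the representation $g(t)={\rm int}\bigcap_\alpha G_\alpha^t$ from Proposition~2, the same bound $\overline{G_\alpha^t}\subseteq\{y:d(y,G_\alpha)\leqslant t\}$ for the middle step, and the same idea of producing an open neighborhood inside every $G_\alpha^t$ for the last step. The only cosmetic difference is that the paper packages the final inclusion as $\bigcap_\alpha\overline{G_\alpha}\subseteq\bigl(\bigcap_\alpha\overline{G_\alpha}\bigr)^t\subseteq{\rm int}\bigcap_\alpha(\overline{G_\alpha})^t$, using the open set $\bigl(\bigcap_\alpha\overline{G_\alpha}\bigr)^t$ in place of your explicit ball $B_\varepsilon(x)$.
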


\begin{proof}
Pick an arbitrary $\alpha$. For every $t>0$ one has $g(t)={\rm int}\bigcap\limits_{\beta}G_{\beta}^t   \subseteq   {\rm int}G_{\alpha}^t   =   G_{\alpha}^t$. This means that $\overline{g(t)} \subseteq   \overline{G_{\alpha}^t}   \subseteq\{x:d(x,G_{\alpha})\leqslant t\}   \subseteq   G_{\alpha}^{2t}$. Therefore $\bigcap\limits_{t>0}\overline{g(t)}   \subseteq   \bigcap\limits_{t>0}G_{\alpha}^{2t}=\overline{G_{\alpha}}$. By arbitrariness of $\alpha$ one has
\begin{equation}\label{eq 2 1}
\bigcap\limits_{t>0}\overline{g(t)}   \subseteq   \bigcap\limits_{\alpha}\overline{G_{\alpha}}.
\end{equation}

On the other hand,
$$
\bigcap\limits_{\alpha}\overline{G_{\alpha}} \subseteq   \left(\bigcap\limits_{\alpha}\overline{G_{\alpha}}\right)^t   \subseteq   {\rm int}\bigcap\limits_{\alpha}(\overline{G_{\alpha}})^t   =   {\rm int}\bigcap\limits_{\alpha}G_{\alpha}^t   =   g(t)
$$
for every $t>0$, and hence $\bigcap\limits_{\alpha}\overline{G_{\alpha}}\subseteq\bigcap\limits_{t>0}g(t)$. Together with \eqref{eq 2 1} this gives the assertion of the lemma.
\end{proof}

\noindent$\bullet$\,\,\,For every function $g\in\overline{I\mathfrak O}$ we consider the set
\begin{equation*}\label{eq 2 g-dot}
\dot g:=\bigcap\limits_{t>0}g(t)\,\subset\,\Omega
\end{equation*}
(which is always closed by Lemma \ref{lem 2 kernel}) and call it \emph{the nucleus} of the element $g$. For every $g   \in   \overline{I\mathfrak O}$ one has
\begin{equation}\label{eq 2 lower bound}
(\dot g)^t \,  \subseteq \,  g(t), \qquad t>0.
\end{equation}
Indeed, using Lemma \ref{lem 2 kernel}, one has:
\begin{equation*}
(\dot g)^t   =   \left(\bigcap\limits_{\alpha}\overline{G_{\alpha}}\right)^t   \subseteq   {\rm int}\bigcap\limits_{\alpha}(\overline{G_{\alpha}})^t   =   {\rm int}\bigcap\limits_{\alpha}G_{\alpha}^t   =   g(t).
\end{equation*}
The relation \eqref{eq 2 lower bound} becomes trivial, if the nucleus is empty. However there is a simple condition which provides non-emptiness of the nucleus.

\begin{Condition}\label{cond compactness}
For every $x\in \Omega$ and $r>0$ the closed ball $B_r[x]$ is compact.
\end{Condition}

\begin{Lemma}\label{lem 2 nonzero}
Under Condition \ref{cond compactness}, $g\in\overline{I\mathfrak O}$ and $g\not=0_{\mathfrak F}$ imply $\dot g\neq\emptyset$.
\end{Lemma}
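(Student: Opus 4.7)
The plan is to reduce the statement to a standard finite-intersection-property argument on a compact ball. By Proposition~\ref{prop 2 convergence 2 monotonic + convergence} I fix a decreasing net $\{G_\alpha\}$ in $\mathfrak O$ with $IG_\alpha\overset{o}\to g$ and $g(t)={\rm int}\bigcap_\alpha G_\alpha^t$ for every $t>0$, and then Lemma~\ref{lem 2 kernel} identifies the nucleus as $\dot g=\bigcap_\alpha\overline{G_\alpha}$. The task therefore becomes: exhibit a common point of this decreasing family of closed sets, using the assumption that $g$ is not identically empty together with Condition~\ref{cond compactness}.

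To localize the argument inside a compact set, I use the hypothesis $g\neq 0_{\mathfrak F}$ to pick $t_0>0$ and a point $x_0\in g(t_0)$. Since $g(t_0)={\rm int}\bigcap_\alpha G_\alpha^{t_0}\subseteq\bigcap_\alpha G_\alpha^{t_0}$, one has $x_0\in G_\alpha^{t_0}$, i.e., $d(x_0,G_\alpha)<t_0$, for \emph{every} $\alpha$. Hence for each $\alpha$ there is $y_\alpha\in G_\alpha\cap B_{t_0}(x_0)$, so the closed set $F_\alpha:=\overline{G_\alpha}\cap B_{t_0}[x_0]$ is nonempty. Because $\{G_\alpha\}$ is decreasing, so are $\{\overline{G_\alpha}\}$ and $\{F_\alpha\}$; consequently, for any finite subfamily $F_{\alpha_1},\dots,F_{\alpha_n}$, directedness of the index set furnishes a common upper bound $\alpha$, whence $F_\alpha\subseteq\bigcap_i F_{\alpha_i}$, and this is nonempty by the preceding step. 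Thus $\{F_\alpha\}$ has the finite intersection property.

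By Condition~\ref{cond compactness} the ball $B_{t_0}[x_0]$ is compact, and each $F_\alpha$ is a closed subset of it, so the finite intersection property forces $\bigcap_\alpha F_\alpha\neq\emptyset$. Since $\bigcap_\alpha F_\alpha\subseteq\bigcap_\alpha\overline{G_\alpha}=\dot g$, the nucleus is nonempty. The only step that really requires attention is the passage from $x_0\in g(t_0)$ (an \emph{interior of an intersection}) to the individual memberships $x_0\in G_\alpha^{t_0}$ for every $\alpha$; once the explicit formula of Proposition~\ref{prop 2 convergence 2 monotonic + convergence} is invoked this is immediate, and I do not foresee any deeper obstacle, as the compactness provided by Condition~\ref{cond compactness} does the substantive work.
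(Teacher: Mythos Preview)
Your proof is correct and follows essentially the same compactness/finite-intersection-property argument as the paper, applied to the decreasing family $\{\overline{G_\alpha}\}$ inside a compact ball. The only difference is cosmetic: you argue directly from a point $x_0\in g(t_0)$ to produce a point of $\dot g$, whereas the paper takes the contrapositive, assuming $\dot g=\emptyset$ and deducing $g(t)=\emptyset$ for all $t$.
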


\begin{proof}
We make use of the following simple statement:
\begin{equation}\label{eq obvious}
A\cap B^t=\emptyset\quad\Leftrightarrow\quad A^t\cap B=\emptyset.
\end{equation}
By Proposition \ref{prop 2 convergence 2 monotonic + convergence}, for $g$ there exists a decreasing net $\{G_{\alpha}\}$ such that $IG_{\alpha} \overset o\to g$ and $\dot g   = \bigcap\limits_{\alpha}\overline{G_{\alpha}}$. Let $\dot g   =   \emptyset$. For every $x   \in   \Omega$ and $t>0$ one has $B_t[x]\cap\left(\bigcap\limits_{\alpha}\overline{G_{\alpha}}\right)   =   \emptyset$. It follows from compactness of the closed ball that there exists a finite set of indices $\alpha_1,\alpha_2,...,\alpha_n$ such that $B_t[x]\cap\left(\bigcap\limits_{i=1}^n\overline{G_{\alpha_i}}\right)   =   \emptyset$. Since the set of indices is directed, there exists $\gamma$, an upper bound of  the set $\{\alpha_1,\alpha_2,...,\alpha_n\}$, and then $B_t[x]\cap\overline{G_{\gamma}}   =   \emptyset$. This means that $\{x\}^t   \cap   G_{\gamma}   =   \emptyset$ which by the statement above is equivalent to $\{x\}   \cap   G_{\gamma}^t = \emptyset$ or to $x \notin G_{\gamma}^t$. Therefore $x \notin \bigcap\limits_{\alpha}G_{\alpha}^t$ for every $x$ and $t$. Hence $g(t)   =   {\rm int}\bigcap\limits_{\alpha}G_{\alpha}^t = \emptyset$ for every $t>0$, which leads to a contradiction.
\end{proof}

As we see, under Condition \ref{cond compactness} the relation \eqref{eq 2 lower bound} is a meaningful {\it estimate from below} for functions from $\overline{I\mathfrak O}$.
\smallskip

\noindent$\bullet$\,\,\,In what follows we will need one more condition on the metric space $(\Omega,d)$.

\begin{Condition}\label{cond distance}
For every $x,y\in\Omega$ and $r,s>0$, $B_r(x)\cap B_s(y)   =   \emptyset$ implies $d(x,y)   \geqslant   r+s$.
\end{Condition}

Let us remark that the class of metric spaces which satisfy Conditions \ref{cond compactness} and \ref{cond distance} contains {\em complete locally compact spaces with inner metric} \cite{Burago-Burago-Ivanov} (including Riemannian manifolds).

\begin{Proposition}\label{prop 2 semigroup}
Under Condition \ref{cond distance}, for every $A\subseteq \Omega$ one has $(A^r)^s   =   A^{r+s}$.
\end{Proposition}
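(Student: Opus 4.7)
The plan is to prove the two inclusions $(A^r)^s \subseteq A^{r+s}$ and $A^{r+s} \subseteq (A^r)^s$ separately. The first is a pure triangle-inequality fact that requires no extra hypothesis, while the second is where Condition \ref{cond distance} enters in an essential way.

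For the inclusion $(A^r)^s \subseteq A^{r+s}$, I would take an arbitrary $x \in (A^r)^s$, so that $d(x,A^r) < s$, pick $y \in A^r$ with $d(x,y) < s$, then pick $a \in A$ with $d(y,a) < r$, and conclude $d(x,a) < r+s$ by the triangle inequality. This gives $x \in A^{r+s}$ with no appeal to Condition \ref{cond distance}.

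For the reverse inclusion $A^{r+s} \subseteq (A^r)^s$, the key observation is that Condition \ref{cond distance} is equivalent, by contraposition, to the statement:
\begin{equation*}
d(x,y) < r+s \,\,\Longrightarrow\,\, B_r(x)\cap B_s(y)\neq\emptyset.
\end{equation*}
Given $x \in A^{r+s}$, I would pick $a \in A$ with $d(a,x) < r+s$ and apply this contrapositive form to the points $a$ and $x$ with radii $r$ and $s$. It produces a point $y \in B_r(a)\cap B_s(x)$, which satisfies $d(y,a)<r$ (hence $y\in A^r$) and $d(x,y)<s$ (hence $x\in (A^r)^s$).

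Combining the two inclusions gives the claimed equality. There is no real obstacle here: the only delicate point is noticing that the hypothesis is tailor-made, in contrapositive form, to supply precisely the intermediate point $y$ that is missing in a general metric space (where $A^{r+s} \subseteq (A^r)^s$ can genuinely fail, e.g.\ in a discrete metric).
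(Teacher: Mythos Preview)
Your argument is correct. The paper does not actually include a proof of this Proposition: it explicitly states that ``relatively simple facts are placed into Propositions, their proofs are not included.'' Your write-up is exactly the natural argument the authors have in mind --- the first inclusion is the triangle inequality, and for the second you correctly recognize that Condition~\ref{cond distance}, read contrapositively, furnishes the intermediate point $y\in B_r(a)\cap B_s(x)$ needed to place $x$ in $(A^r)^s$. There is nothing to add or compare.
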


\begin{Corollary*}
Under Condition \ref{cond distance}, for every $x\in\Omega$ and $r>0$ one has $\overline{B_r(x)}=B_r[x]$.
\end{Corollary*}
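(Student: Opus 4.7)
The inclusion $\overline{B_r(x)} \subseteq B_r[x]$ is free: the function $y \mapsto d(x,y)$ is continuous, so $B_r[x] = \{y : d(x,y) \leqslant r\}$ is closed and contains $B_r(x)$, hence contains its closure. All the work is in the reverse inclusion $B_r[x] \subseteq \overline{B_r(x)}$, and Condition \ref{cond distance} is tailor-made for this.

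My plan is to argue the reverse inclusion by contrapositive. Suppose $y \notin \overline{B_r(x)}$. Since $\Omega \setminus \overline{B_r(x)}$ is open and contains $y$, there exists $s > 0$ with $B_s(y) \subseteq \Omega \setminus \overline{B_r(x)}$, and in particular $B_s(y) \cap B_r(x) = \emptyset$. Applying Condition \ref{cond distance} to the pair of open balls $B_r(x)$ and $B_s(y)$ yields $d(x,y) \geqslant r + s > r$, so $y \notin B_r[x]$. Contraposition gives $B_r[x] \subseteq \overline{B_r(x)}$, completing the proof.

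As an alternative route, one could deduce the corollary from Proposition \ref{prop 2 semigroup} applied to the singleton $A = \{x\}$, using $(B_r(x))^s = \{x\}^{r+s} = B_{r+s}(x)$. Then $y \notin \overline{B_r(x)}$ supplies $s > 0$ with $B_s(y) \cap B_r(x) = \emptyset$, which by \eqref{eq obvious} (with $A = \{y\}$ and $B = B_r(x)$) is equivalent to $\{y\} \cap B_{r+s}(x) = \emptyset$, i.e.\ $d(x,y) \geqslant r+s > r$. This derivation is slightly more circuitous but shows that the corollary really is a corollary of the preceding proposition, not an independent consequence of Condition \ref{cond distance}.

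The main step to be careful with is extracting the radius $s > 0$: one must use that the complement of the closure is open, not just that $y$ is not a limit point. Once $s$ is in hand, there is no obstacle — Condition \ref{cond distance} is essentially the assertion that open balls separate at the metric distance, which is exactly the geometric content of the corollary.
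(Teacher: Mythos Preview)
Your proof is correct. The paper does not actually supply a proof of this Corollary; it merely records it as a consequence of Proposition~\ref{prop 2 semigroup}. Your direct contrapositive argument from Condition~\ref{cond distance} is clean and fully rigorous, and your alternative route via Proposition~\ref{prop 2 semigroup} confirms that the result genuinely deserves the label ``Corollary''.

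If you want the derivation from Proposition~\ref{prop 2 semigroup} to feel less circuitous, note that for any set $A$ one has the general identity $\overline{A}=\bigcap_{s>0}A^s$ (both sides equal $\{y:d(y,A)=0\}$). Applying this with $A=B_r(x)=\{x\}^r$ and using Proposition~\ref{prop 2 semigroup} gives
\[
\overline{B_r(x)}=\bigcap_{s>0}(\{x\}^r)^s=\bigcap_{s>0}\{x\}^{r+s}=\bigcap_{s>0}B_{r+s}(x)=B_r[x],
\]
which is presumably the one-line argument the authors had in mind. Your version and this one are logically equivalent; yours isolates exactly where Condition~\ref{cond distance} enters, while this one makes the ``corollary of the semigroup property'' structure explicit.
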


The following lemma has technical character.

\begin{Lemma}\label{lem technical}
Let Conditions \ref{cond compactness} and \ref{cond distance} hold. Then for every decreasing net $\{A_{\alpha}\}$ of sets from the space $\Omega$ one has
\begin{equation*}\label{eq 2 general 2-side bound}
\left(\bigcap\limits_{\alpha}A_{\alpha}\right)^t   \subseteq   \bigcap\limits_{\alpha}A_{\alpha}^t
\subseteq   \overline{\left(\bigcap\limits_{\alpha}\overline{A_{\alpha}}\right)^t},
\quad t>0.
\end{equation*}
\end{Lemma}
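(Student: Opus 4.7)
The first inclusion is the elementary direction and holds without any hypotheses: if $y \in \bigcap_\alpha A_\alpha$ satisfies $d(x,y) < t$, then $y \in A_\alpha$, hence $x \in A_\alpha^t$, for every $\alpha$. My plan therefore concentrates on the second inclusion.

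Set $B := \bigcap_\alpha \overline{A_\alpha}$ and fix $x \in \bigcap_\alpha A_\alpha^t$. The strategy is first to show $d(x,B) \leqslant t$ using Condition \ref{cond compactness}, and then to deduce $x \in \overline{B^t}$ using Condition \ref{cond distance}. For the first step I would mimic the compactness argument from the proof of Lemma \ref{lem 2 nonzero}: if instead $d(x,B) > t$, then the compact ball $B_t[x]$ is disjoint from $\bigcap_\alpha \overline{A_\alpha}$, and since $\{\overline{A_\alpha}\}$ is a decreasing net of closed subsets of $B_t[x]$, the finite intersection property combined with directedness of the index set yields a single $\gamma$ with $B_t[x] \cap \overline{A_\gamma} = \emptyset$. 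But $x \in A_\gamma^t$ produces some $z \in A_\gamma$ with $d(x,z) < t$, and such $z$ lies in both $B_t[x]$ and $\overline{A_\gamma}$, a contradiction. In particular this also shows $B \neq \emptyset$.

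For the second step, given $\varepsilon > 0$, pick $y \in B$ with $d(x,y) < t + \varepsilon$, which is possible since $d(x,B) \leqslant t$. The contrapositive of Condition \ref{cond distance}, applied with radii $\varepsilon$ and $t$, supplies a point $z \in B_\varepsilon(x) \cap B_t(y)$; then $d(x,z) < \varepsilon$ and $d(z,B) \leqslant d(z,y) < t$, so $z \in B^t$. Hence every $\varepsilon$-neighborhood of $x$ meets $B^t$, which is the definition of $x \in \overline{B^t}$.

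The main obstacle is the first step, and in particular the choice of $B$: one must pass to the intersection of the \emph{closures} $\overline{A_\alpha}$ (rather than of the $A_\alpha$ themselves) in order to apply the compactness of $B_t[x]$ to a family of closed sets via the finite intersection property. Once $d(x,B)\leqslant t$ is in hand, the second step is an almost mechanical application of Condition \ref{cond distance}, which is precisely the hypothesis producing the ``geodesic-like'' intermediate point $z$ that realises the triangle inequality up to the prescribed slack $\varepsilon$.
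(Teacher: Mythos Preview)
Your argument is correct and follows essentially the same route as the paper's: both hinge on the finite-intersection-property of the compact ball $B_t[x]$ against the decreasing closed family $\{\overline{A_\alpha}\}$, together with Condition~\ref{cond distance} to pass between $d(x,B)\leqslant t$ and $x\in\overline{B^t}$. The only difference is organizational---the paper argues by contrapositive (starting from $x\notin\overline{B^t}$) and invokes Condition~\ref{cond distance} through the semigroup identity of Proposition~\ref{prop 2 semigroup} and the relation~\eqref{eq obvious}, whereas you argue directly and apply Condition~\ref{cond distance} to the pair $B_\varepsilon(x),\,B_t(y)$ without that intermediate step.
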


\begin{proof}
The first inclusion is always true, let us prove the second. Let $x\notin\overline{\left(\bigcap\limits_{\alpha}\overline{A_{\alpha}}\right)^t}$. Then, by separation axioms, there exists $r>0$ such that $B_r(x)   \cap   \overline{\left(\bigcap\limits_{\alpha}\overline{A_{\alpha}}\right)^t}   =   \emptyset$. Hence $B_r(x)   \cap \left(\bigcap\limits_{\alpha}\overline{A_{\alpha}}\right)^t   =   \emptyset$, and using \eqref{eq obvious} and Proposition \ref{prop 2 semigroup}, one has $(B_r(x))^t   \cap   (\bigcap\limits_{\alpha}\overline{A_{\alpha}})   =   B_{r+t}(x)   \cap   \left(\bigcap\limits_{\alpha}\overline{A_{\alpha}}\right) =   \emptyset$, and therefore $B_t[x]   \cap   \left(\bigcap\limits_{\alpha}\overline{A_{\alpha}}\right)   =   \emptyset$. The closed ball is compact, hence there exists a finite set of indices $\{\alpha_1, \alpha_2,...,\alpha_n\}$ such that $B_t[x]   \cap   \left(\bigcap\limits_{i=1}^n\overline{A_{\alpha_i}}\right)   =   \emptyset$. The net $\{\overline{A_{\alpha}}\}$ decreases; hence for $\gamma   =   \sup\{\alpha_1,\alpha_2,...,\alpha_n\}$ one has $B_t[x]   \cap   \overline{A_{\gamma}}=\emptyset$. Consequently, $B_t(x)   \cap   A_{\gamma}   = \emptyset$, which due to \eqref{eq obvious} is equivalent to $x \notin A_{\gamma}^t$, thus $x \notin\bigcap\limits_{\alpha}A_{\alpha}^t$. Therefore $\bigcap\limits_{\alpha}A_{\alpha}^t   \subseteq \overline{\left(\bigcap\limits_{\alpha}\overline{A_{\alpha}}\right)^t}$, which completes the proof.
\end{proof}

\noindent$\bullet$\,\,\,Now we can obtain an {\it estimate from above} for an element of $\overline{I\mathfrak O}$ in terms of its nucleus.

\begin{Lemma}\label{lem 2 upper bound}
Let Conditions \ref{cond compactness} and \ref{cond distance} hold. Then for every function $g   \in   \overline{I\mathfrak O}$ one has
\begin{equation}\label{eq 2 upper bound}
g(t) \,  \subseteq   \,{\rm int\,}\overline{{\dot g}^t}, \quad t   >   0.
\end{equation}
\end{Lemma}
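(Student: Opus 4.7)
The statement follows almost directly by combining Proposition~\ref{prop 2 convergence 2 monotonic + convergence}, Lemma~\ref{lem 2 kernel}, and Lemma~\ref{lem technical}, so the plan is to chain them together rather than argue from scratch.

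First, I would invoke Proposition~\ref{prop 2 convergence 2 monotonic + convergence} to obtain a decreasing net $\{G_{\alpha}\}$ in $\mathfrak O$ with $IG_{\alpha}\overset{o}\to g$, and with the explicit representation
\begin{equation*}
g(t) \;=\; {\rm int}\bigcap\limits_{\alpha}G_{\alpha}^{t},\qquad t>0.
\end{equation*}
Second, Lemma~\ref{lem 2 kernel} identifies the nucleus with this net: $\dot g=\bigcap\limits_{\alpha}\overline{G_{\alpha}}$.

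Next I would feed the decreasing net $\{G_{\alpha}\}$ into Lemma~\ref{lem technical} (this is precisely the step that uses Conditions~\ref{cond compactness} and \ref{cond distance}); its right-hand inclusion gives
\begin{equation*}
\bigcap\limits_{\alpha}G_{\alpha}^{t}\;\subseteq\;\overline{\left(\bigcap\limits_{\alpha}\overline{G_{\alpha}}\right)^{t}}\;=\;\overline{(\dot g)^{t}},
\end{equation*}
where the last equality is Lemma~\ref{lem 2 kernel}. Therefore $g(t)\subseteq\bigcap\limits_{\alpha}G_{\alpha}^{t}\subseteq\overline{(\dot g)^{t}}$.

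Finally, to upgrade this to the desired inclusion into the interior, I would observe that $g(t)={\rm int}\bigcap\limits_{\alpha}G_{\alpha}^{t}$ is by construction open, and the interior of a set is the union of all open sets it contains; hence any open subset of $\overline{(\dot g)^{t}}$ lies in ${\rm int}\,\overline{(\dot g)^{t}}$, yielding \eqref{eq 2 upper bound}. There is no real obstacle here --- the lemma is essentially a packaging of Lemma~\ref{lem technical} in terms of the nucleus. The only point that required the two standing conditions has already been absorbed into Lemma~\ref{lem technical}, so once the right net is chosen the argument is a one-line chain of inclusions.
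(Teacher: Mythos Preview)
Your proposal is correct and follows essentially the same route as the paper: choose a decreasing net via Proposition~\ref{prop 2 convergence 2 monotonic + convergence}, identify $\dot g=\bigcap_\alpha\overline{G_\alpha}$ via Lemma~\ref{lem 2 kernel}, apply Lemma~\ref{lem technical}, and then pass to the interior. The only cosmetic difference is that the paper obtains the interior by applying the monotone operator ${\rm int}$ to both sides of $\bigcap_\alpha G_\alpha^t\subseteq\overline{\dot g^t}$, whereas you argue that the open set $g(t)$ must lie in ${\rm int}\,\overline{\dot g^t}$; these are equivalent one-line observations.
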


\begin{proof}
If $g\equiv\emptyset$, then the assertion is obvious. Let $g\not\equiv\emptyset$. Then by Lemma \ref{lem technical} one has $\bigcap\limits_{\alpha}G_{\alpha}^t \subseteq\overline{\left(\bigcap\limits_{\alpha}\overline{G_{\alpha}}\right)^t}=\overline{\dot g^t}$, therefore $g(t)={\rm int}\bigcap\limits_{\alpha}G_{\alpha}^t\subseteq{\rm int\,}\overline{\dot g^t}$.
\end{proof}

\noindent$\bullet$\,\,\, Let as show that the set $\overline{I\mathfrak O}$ contains sufficient amount of functions which nuclei are single points. Define $b_*[x], b^*[x]\in {\mathfrak F}$ by the equalities
\begin{equation}\label{eq def b*[x]}
(b_*[x])(t)=B_t(x), \quad (b^*[x])(t)={\rm int}\,B_t[x],\qquad t>0,
\end{equation}
note that $b_*[x]\leqslant b^*[x]$.

\begin{Lemma}\label{lem 3 segment non-empty}
Let Condition \ref{cond distance} hold. Then for every $x\in\Omega$ one has $b^*[x]\in\overline{I\mathfrak O}$, and the nucleus of the function $b^*[x]$ consists of the single point $x$.
\end{Lemma}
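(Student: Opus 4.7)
The plan is to exhibit an explicit decreasing net $\{G_s\}$ in $\mathfrak O$ whose image under $I$ order-converges to $b^*[x]$, and then read off the nucleus via Lemma~\ref{lem 2 kernel}.

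The natural candidate is $G_s := B_s(x) = \{x\}^s$, $s>0$, with $s$ directed by the reverse of its usual order (equivalently, the sequence $G_n := B_{1/n}(x)$, $n \in \mathbb{N}$). This is a decreasing net in $\mathfrak O$ since $s_1 \leqslant s_2$ gives $B_{s_1}(x) \subseteq B_{s_2}(x)$. Proposition~\ref{prop 2 semigroup}, which uses Condition~\ref{cond distance}, then yields
\begin{equation*}
(IG_s)(t) \;=\; (B_s(x))^t \;=\; (\{x\}^s)^t \;=\; \{x\}^{s+t} \;=\; B_{s+t}(x), \qquad t > 0,
\end{equation*}
so $\{IG_s\}$ is a decreasing net in $\mathfrak F$.

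Because order convergence in $\mathfrak F$ is point-wise and a decreasing net in $\mathfrak O$ converges in order to its greatest lower bound (Proposition~\ref{prop 1 monotonic}), the limit is the function
\begin{equation*}
t \;\mapsto\; {\rm int}\bigcap_{s>0} B_{s+t}(x) \;=\; {\rm int}\,\{y : d(y,x) \leqslant t\} \;=\; {\rm int}\, B_t[x] \;=\; (b^*[x])(t).
\end{equation*}
Hence $IG_s \overset{o}{\to} b^*[x]$ and $b^*[x] \in \overline{I\mathfrak O}$.

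To identify the nucleus, I apply Lemma~\ref{lem 2 kernel} to this very net:
\begin{equation*}
\dot{b}^*[x] \;=\; \bigcap_{s>0} \overline{G_s} \;=\; \bigcap_{s>0} \overline{B_s(x)} \;=\; \bigcap_{s>0} B_s[x] \;=\; \{x\},
\end{equation*}
where the penultimate equality is the Corollary to Proposition~\ref{prop 2 semigroup} (open balls and closed balls coincide in closure under Condition~\ref{cond distance}), and the final equality is immediate since $d(y,x) \leqslant s$ for all $s>0$ forces $y=x$. The argument is short; the only point requiring bookkeeping is placing the ``${\rm int}$'' correctly when passing to the limit, which is handled by first computing the set intersection $\bigcap_s B_{s+t}(x) = B_t[x]$ and then taking the interior.
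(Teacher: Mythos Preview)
Your proof is correct and follows essentially the same route as the paper: both use the decreasing net $G_s=B_s(x)$, apply Proposition~\ref{prop 2 semigroup} to get $(B_s(x))^t=B_{s+t}(x)$, and identify the point-wise order limit as ${\rm int}\,B_t[x]$. The only cosmetic differences are that the paper obtains $\bigcap_{s>0}B_{s+t}(x)=B_t[x]$ by rewriting $B_{s+t}(x)=(B_t(x))^s$ and invoking the Corollary, whereas you compute it directly, and that the paper reads off the nucleus from the definition $\bigcap_{t>0}{\rm int}\,B_t[x]$ while you invoke Lemma~\ref{lem 2 kernel}; neither change affects the argument.
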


\begin{proof}
Let us prove that the net $\{IB_{\varepsilon}(x)\}_{\varepsilon>0}$ converges in $\mathfrak F$ to $b^*[x]$ as $\varepsilon\rightarrow+0$. Since this net is decreasing, by Proposition \ref{prop 1 monotonic} one needs to check that ${\rm int\,}\bigcap\limits_{\varepsilon>0} (B_{\varepsilon}(x))^t   =   {\rm int\,}   B_t[x]$ for every $t>0$. From Proposition \ref{prop 2 semigroup} one has $(B_{\varepsilon}(x))^t=B_{\varepsilon+t}(x)   = (B_{t}(x))^{\varepsilon}$. Furthermore, $\bigcap\limits_{\varepsilon>0} (B_{\varepsilon}(x))^t =\bigcap\limits_{\varepsilon>0} (B_{t}(x))^{\varepsilon} =\overline{B_{t}(x)}=B_t[x]$, by Corollary from Proposition \ref{prop 2 semigroup}. From this it follows that
$$
{\rm int}   \bigcap\limits_{\varepsilon>0} (B_{\varepsilon}(x))^t={\rm int\,}   B_t[x],  \qquad t>0.
$$
The nucleus of the function $b^*[x]$, which is $\bigcap\limits_{t>0}{\rm int\,}   B_t[x]$, obviously contains the point $x$ and cannot contain any other points, which means it coincides with $\{x\}$.
\end{proof}

\section{The wave model}
$\bullet$\,\,\,Consider an equivalence relation on functions from the set $\overline{I\mathfrak O}$ defined as follows:
\begin{equation*}
f_1\sim f_2 \quad \Leftrightarrow \quad \bigcap_{t>0}f_1(t)=\bigcap_{t>0}f_2(t)\quad\Leftrightarrow\quad\dot f_1=\dot f_2.
\end{equation*}
Let $\langle f\rangle$ be the equivalence class of the function $f$. For the functions defined in \eqref{eq def b*[x]} one has $\dot b_*[x]=\dot b^*[x]=\{x\}$, and so $b_*[x]\sim b^*[x]$ and $\langle b_*[x]\rangle=\langle b^*[x]\rangle$. All class representatives have the same nucleus, which allows us to define the latter as {\it the nucleus of the equivalence class} denoted in what follows by $\langle...\rangle^{^\centerdot}$. Thus on has $\langle b_*[x]\rangle^{^\centerdot}=\langle b^*[x]\rangle^{^\centerdot}=\{x\}$.

On the set of classes $\overline{I\mathfrak O}\slash_{\sim}$ define a partial order in the following way:
\begin{equation*}
\langle f\rangle\leqslant \langle g\rangle\quad\Leftrightarrow\quad\bigcap_{t>0}f(t)\subseteq\bigcap_{t>0}g(t)\quad\Leftrightarrow\quad\dot f\subseteq\dot g.
\end{equation*}

\noindent$\bullet$\,\,\,Let ${\mathscr P}$ be a partially ordered set with the least element $0$. An element $a\in\mathscr P$ is called {\it an atom} ($a\in {\rm At}\mathscr P$), if $0<p\leqslant a$ implies $p=a$\,\,\,\cite{Birkhoff}. Note that the lattice of open sets $\mathfrak O$ can contain no atoms, e. g., in the case $\Omega=\mathbb R^n$.

The partially ordered set $\overline{I\mathfrak O}\slash_{\sim}$ has the least element $0_{\mathfrak F}$, and therefore one can speak of its atoms
\begin{equation*}\label{eq 3 atoms}
\widetilde{\Omega} \,  :=  \, {\rm At\,}   (\overline{I\mathfrak
O}\slash_{\sim}).
\end{equation*}
The set $\widetilde{\Omega}$ is the main object of the present paper.

\begin{Lemma}
Let Conditions \ref{cond compactness} and \ref{cond distance} hold. Then $\widetilde\Omega=\{\langle b_*[x]\rangle\,|\,\, x\in\Omega\}$.
\end{Lemma}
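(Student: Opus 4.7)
The plan is to verify both inclusions of the claimed equality, which reduces to short observations once we recall that the partial order on $\overline{I\mathfrak O}/\sim$ is just set inclusion of nuclei and the least element corresponds to the empty nucleus.

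For the inclusion $\{\langle b_*[x]\rangle : x\in\Omega\} \subseteq \widetilde\Omega$: for every $x \in \Omega$, Lemma~\ref{lem 3 segment non-empty} (which uses Condition~\ref{cond distance}) gives $b^*[x] \in \overline{I\mathfrak O}$ with $\dot b^*[x] = \{x\}$; since trivially $\dot b_*[x] = \bigcap_{t>0} B_t(x) = \{x\}$ as well, we have $b_*[x] \sim b^*[x]$, so $\langle b_*[x]\rangle = \langle b^*[x]\rangle$ is a well-defined nonzero element of $\overline{I\mathfrak O}/\sim$ with nucleus $\{x\}$. To see it is an atom, I would suppose $0_{\mathfrak F} < \langle g\rangle \leqslant \langle b_*[x]\rangle$; by the very definition of the order, $\emptyset \neq \dot g \subseteq \{x\}$, which forces $\dot g = \{x\}$ and hence $\langle g\rangle = \langle b_*[x]\rangle$.

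For the reverse inclusion, let $\langle f\rangle \in \widetilde\Omega$. Being an atom, $\langle f\rangle > 0_{\mathfrak F}$, so $\dot f \neq \emptyset$; pick any $x \in \dot f$. Then $\dot b_*[x] = \{x\} \subseteq \dot f$, i.e., $\langle b_*[x]\rangle \leqslant \langle f\rangle$, and $\langle b_*[x]\rangle > 0_{\mathfrak F}$ by the first part. Atomicity of $\langle f\rangle$ forces $\langle b_*[x]\rangle = \langle f\rangle$, establishing the claim.

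The only nontrivial input is Lemma~\ref{lem 3 segment non-empty}, which guarantees that every singleton $\{x\}$ is actually realized as the nucleus of some element of $\overline{I\mathfrak O}$; without this there would be no reason for atoms to be indexed by points of $\Omega$, and the map $x \mapsto \langle b_*[x]\rangle$ would not even be known to land in $\overline{I\mathfrak O}/\sim$. Once this is in hand the argument is a pure unwinding of the definitions of $\sim$, of the partial order, and of ``atom''. Notably, Condition~\ref{cond compactness} plays no direct role in this particular proof — it is needed elsewhere via Lemma~\ref{lem 2 nonzero} to ensure nontriviality of the nucleus map, but atoms of $\overline{I\mathfrak O}/\sim$ have nonempty nuclei automatically by their very definition.
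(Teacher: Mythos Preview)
Your proof is correct and follows essentially the same route as the paper's: both directions are handled by the observation that the order on $\overline{I\mathfrak O}/\sim$ is inclusion of nuclei, so a class is an atom precisely when its nucleus is a singleton, and Lemma~\ref{lem 3 segment non-empty} supplies a representative in $\overline{I\mathfrak O}$ with nucleus $\{x\}$ for each $x$. Your version is slightly more explicit than the paper's in justifying why the symbol $\langle b_*[x]\rangle$ denotes a genuine element of $\overline{I\mathfrak O}/\sim$ (via $b^*[x]\in\overline{I\mathfrak O}$, since $b_*[x]$ itself need not lie there), and your closing remark that Condition~\ref{cond compactness} is not invoked in this argument is accurate.
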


\begin{proof}
Let us prove that for every $x\in\Omega$ the class $\langle b_*[x]\rangle$ is an atom. Since its nucleus consists of only one point $x$, every class which is strictly less than $\langle b_*[x]\rangle$ should have empty nucleus, but then this class should be the least. Consequently $\langle b_*[x]\rangle$ is an atom. To prove the converse, let $\langle a\rangle$ be an atom. Then for every $x\in \langle a\rangle^{^\centerdot}$ the inequality $\langle b_*[x]\rangle\leqslant\langle a\rangle$ holds. By the definition of atom this means that $\langle a\rangle=\langle b_*[x]\rangle$ and $\langle a\rangle^{^\centerdot} = \{x\}$.
\end{proof}

Let the functions $a,b\in\overline{I\mathfrak O}$ be representatives of the atoms $\langle a\rangle$ and $\langle b\rangle$, respectively. Let us call the function
\begin{equation*}\label{eq 3 tau}
\tau(\langle a\rangle,\langle b\rangle)   :=   2{\,\rm inf\,}\{t\   |\   a(t) \cap   b(t) \neq \emptyset\}
\end{equation*}
the {\it wave distance} between these atoms. The choice of this name is motivated by applications where the points corresponding to the atoms initiate waves which propagate in $\Omega$ with the unit speed. At the moment $t=\frac{\tau(\langle a\rangle,\langle b\rangle)}{2}$ these waves start to overlap \cite{B JOT}. Correctness of this definition (finiteness for every pair of atoms, independence of the choice of representatives) follows from the next lemma.

\begin{Lemma}\label{lem 3 distance}
Let the Conditions \ref{cond compactness} and \ref{cond distance} hold. Then for every $x,y\in\Omega$ one has $\tau(\langle b_*[x]\rangle, \langle b_*[y]\rangle)= d(x,y)$.
\end{Lemma}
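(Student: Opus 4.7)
The plan is to unpack the definitions and reduce the claim to a two-sided inequality. By definition $(b_*[x])(t)=B_t(x)$ and $(b_*[y])(t)=B_t(y)$, so
$$\tau(\langle b_*[x]\rangle,\langle b_*[y]\rangle) = 2\,\inf\{t>0\,:\,B_t(x)\cap B_t(y)\neq\emptyset\}.$$
Write $T$ for this infimum (the set is non-empty, since for any $t>d(x,y)$ the point $y$ lies in both $B_t(x)$ and $B_t(y)$). The task then is to show $2T=d(x,y)$ by proving $T\geqslant d(x,y)/2$ and $T\leqslant d(x,y)/2$ separately. Only Condition \ref{cond distance} will actually be needed; Condition \ref{cond compactness} is stated in the hypothesis but the argument below does not invoke it.

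For the lower bound $T\geqslant d(x,y)/2$, I would use the ordinary triangle inequality. If $t$ is such that $B_t(x)\cap B_t(y)$ contains a point $z$, then $d(x,z)<t$ and $d(z,y)<t$, so $d(x,y)\leqslant d(x,z)+d(z,y)<2t$. Hence every admissible $t$ satisfies $t>d(x,y)/2$, giving $T\geqslant d(x,y)/2$ after taking the infimum.

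For the upper bound $T\leqslant d(x,y)/2$, I would invoke the contrapositive of Condition \ref{cond distance}: whenever $d(x,y)<r+s$ one has $B_r(x)\cap B_s(y)\neq\emptyset$. Applying this with $r=s=t$ shows that any $t>d(x,y)/2$ is admissible, so $T\leqslant t$ for every such $t$, whence $T\leqslant d(x,y)/2$. Combining the two bounds yields $2T=d(x,y)$, which is the claim. I do not anticipate any real obstacle here: the content of the Proposition is essentially that Condition \ref{cond distance} is precisely the quantitative strengthening of the triangle inequality that lets the original metric be recovered as the wave distance of the canonical single-point atoms $\langle b_*[x]\rangle$.
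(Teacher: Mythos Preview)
Your two inequalities are fine for the particular pair $a=b_*[x]$, $b=b_*[y]$, but there is a genuine gap: the function $b_*[x]$ need not belong to $\overline{I\mathfrak O}$ at all (the paper itself points this out in the example $\Omega=[0,1]$), so it is not, in general, an admissible representative. More importantly, the sentence immediately preceding the Lemma says that \emph{well-definedness} of $\tau$ --- its independence of the choice of representatives --- is to be extracted from this very Lemma. Hence the proof must take arbitrary representatives $a\in\langle b_*[x]\rangle$, $b\in\langle b_*[y]\rangle$ in $\overline{I\mathfrak O}$ and show that the infimum is $d(x,y)/2$ for every such choice, not just for one convenient pair.

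The paper handles this by invoking the two-sided estimate $(\dot a)^t\subseteq a(t)\subseteq{\rm int}\,\overline{(\dot a)^t}$ coming from \eqref{eq 2 lower bound} and Lemma~\ref{lem 2 upper bound}. Since $\dot a=\{x\}$, this reads $B_t(x)\subseteq a(t)\subseteq{\rm int}\,B_t[x]$ (via the Corollary to Proposition~\ref{prop 2 semigroup}), and similarly for $b$. With this sandwich your argument goes through unchanged: for $t<d(x,y)/2$ one has $a(t)\cap b(t)\subseteq{\rm int}\,B_t[x]\cap{\rm int}\,B_t[y]=\emptyset$, and for $t>d(x,y)/2$ one has $a(t)\cap b(t)\supseteq B_t(x)\cap B_t(y)\neq\emptyset$ by Condition~\ref{cond distance}. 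Note that Lemma~\ref{lem 2 upper bound} requires Condition~\ref{cond compactness}, so your remark that compactness is unused is incorrect: it is precisely what lets you control an arbitrary representative from above.
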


\begin{proof}
Since for every representatives $a\in\langle b_*[x]\rangle$ and $b\in\langle b_*[y]\rangle$ the inclusions $B_t(x) \subseteq   a(t)\subseteq   {\rm int}B_t[x]$ and $B_t(y) \subseteq   b(t) \subseteq {\rm int}B_t[y]$ hold, for $t<\frac{d(x,y)}2$ one has $a(t)\cap b(t)\subseteq{\rm int}B_t[x]\cap{\rm int}B_t[y] =\emptyset$. For $t>\frac{d(x,y)}2$ it is true that $a(t)\cap b(t)\supseteq   B_t(x) \cap   B_t(y)   \neq \emptyset$, because if $B_t(x)   \cap B_t(y)   =   \emptyset$, then by Condition \ref{cond distance} $d(x,y)\geqslant 2t$. Therefore ${\,\rm inf\,}\{t\   |\ a(t)   \cap   b(t)   \neq \emptyset\} =\frac{d(x,y)}2$, and this proves the lemma.
\end{proof}

Let us call the space $(\widetilde\Omega,\tau)$  {\it the wave model} of the original space $(\Omega,d)$. One can summarize the preceding considerations to formulate the main result of the paper.

\begin{Theorem}
Under Conditions \ref{cond compactness} and \ref{cond distance} the wave model $(\widetilde\Omega,\tau)$ is isometric to the original $(\Omega,d)$.
\end{Theorem}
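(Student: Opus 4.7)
The plan is to show that the map $\Phi : \Omega \to \widetilde\Omega$ defined by $\Phi(x) := \langle b_*[x] \rangle$ is a bijective isometry, since essentially all the work has been done in the preceding lemmas and this proof amounts to assembling them.

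First, I would verify that $\Phi$ is well-defined and surjective. Well-definedness is immediate from Lemma \ref{lem 3 segment non-empty} together with the observation preceding the definition of $\tau$, which shows that $b_*[x]\in\overline{I\mathfrak O}$ and represents an atom (with nucleus $\{x\}$); surjectivity is exactly the content of the unnamed lemma preceding Lemma \ref{lem 3 distance}, which asserts $\widetilde\Omega = \{\langle b_*[x]\rangle \mid x \in \Omega\}$.

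Next, I would verify injectivity using nuclei: if $\Phi(x)=\Phi(y)$, then $\langle b_*[x]\rangle^{^\centerdot}=\langle b_*[y]\rangle^{^\centerdot}$, and since these singleton nuclei are $\{x\}$ and $\{y\}$ respectively (by the same unnamed lemma and the explicit identification made after the definition of $\sim$), we conclude $x=y$.

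Finally, the isometry property is precisely Lemma \ref{lem 3 distance}: for arbitrary $x,y\in\Omega$,
\[
\tau(\Phi(x),\Phi(y))=\tau(\langle b_*[x]\rangle,\langle b_*[y]\rangle)=d(x,y).
\]
Combining bijectivity with this equality, $\Phi$ is an isometry of $(\Omega,d)$ onto $(\widetilde\Omega,\tau)$, which is the claim. There is no real obstacle in this final step; the genuine content of the theorem lies in Lemmas \ref{lem 2 nonzero}, \ref{lem 2 upper bound}, \ref{lem 3 segment non-empty} and \ref{lem 3 distance}, which together guarantee that atoms exist, are in bijection with points of $\Omega$, and carry the correct metric structure.
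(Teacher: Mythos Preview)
Your proposal is correct and matches the paper's approach exactly: the paper merely states that the isometry is realized by the bijection $x\mapsto\langle b_*[x]\rangle$, with all content deferred to the preceding lemmas. One small correction: Lemma~\ref{lem 3 segment non-empty} shows $b^*[x]\in\overline{I\mathfrak O}$, not $b_*[x]$ --- the paper's own examples show $b_*[x]$ need not lie in $\overline{I\mathfrak O}$ --- so well-definedness of $\Phi$ actually rests on $b^*[x]$ serving as a representative (the paper's use of $\langle b_*[x]\rangle$ as synonymous with $\langle b^*[x]\rangle$ is slightly informal here).
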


The isometry is realized by the bijection $\Omega\ni x\mapsto \langle b_*[x]\rangle\in \tilde\Omega$.

\section*{Examples and comments}
Passing to equivalence classes is important. One can prove that in the set $\overline{I\mathfrak O}$ atoms correspond to points of the space $\Omega$. At the same time several atoms can correspond to one point and be incomparable to each other, while belonging to the segment $\left[\,[b_*[x],b^*[x]\,\right]:=\{g\in{\mathfrak F}\,|\,\,b_*[x]\leqslant g\leqslant b^*[x]\}$. We demonstrate this fact by examples.
\smallskip

\noindent$\bullet$\,\,\,In $\Omega=\mathbb R^n$ a unique atom of the set $\overline{I\mathfrak O}$ corresponds to every $x\in\mathbb R^n$: $a_x(t)=B_t(x),\,\,t>0$, $\dot a_x=\{x\}$, so that the wave model can be constructed without factorization.
\smallskip

\noindent$\bullet$ If $\Omega=[0,1],\,\,d(x,y)=|x-y|$, then for every $x\in(0,1)$ the segment $\left[\,b_*[x],b^*[x]\,\right]$ consists of four functions (see Fig. \ref{fig 1}); at the same time $B_t(x)$ does not belong to the set $\overline{I\mathfrak O}$, and two of these functions ($a^{(1)}$ and $a^{(2)}$ on the figure) are atoms of $\overline{I\mathfrak O}$. For $x\in(0,\frac12]$ the atoms are
$$
a^{(1)}(t)   :=
\begin{cases}
B_t(x), & 0<t<1-x,\ t\neq x,\\
[0,2x), & t=x,\\
[0,1), & t=1-x,\\
\Omega, & t>1-x
\end{cases}
$$
and
$$
a^{(2)}(t)   :=
\begin{cases}
B_t(x), & 0<t<1-x,\ t\neq x,\\
(0,2x), & t=x,\\
\Omega, & t\geqslant1-x.
\end{cases}
$$
One can take $G^{(1)}_{\varepsilon}=(x-\varepsilon,x)$ and $G^{(2)}_{\varepsilon}=(x,x+\varepsilon)$ as the initial approximating sets for these atoms: for $\varepsilon\to+0$ one has $(G^{(i)}_{\varepsilon})^t\to a^{(i)}(t)$ for $i=1,2$. On the figure the points marked with small circles are excluded. After factorization all the four functions become identical.

\begin{figure}[h]
\begin{center}
\includegraphics[width=\textwidth]{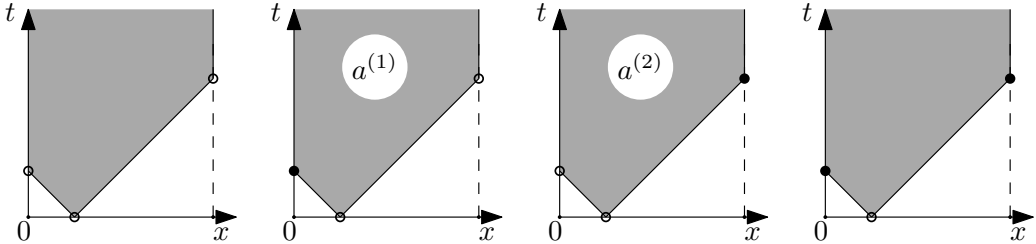}
\caption{The elements of the segment $\left[\,[b_*[x],b^*[x]\,\right]$} \label{fig 1}
\end{center}
\end{figure}

\noindent$\bullet$\,\,\,Situation in which more than one atom of the set $\overline{I\mathfrak O}$ corresponds to one point of $\Omega$ is possible for Riemmanian manifolds, if for large $t$ the boundaries of the balls $B_t(x)$ have self-intersections. Besides that, the picture gets more complicated, if $\Omega$ has a boundary. Nevertheless, in this case Conditions \ref{cond compactness} and \ref{cond distance} are satisfied and the wave model is isometric to the manifold. This fact plays the key role for the problem of reconstructing a manifold from the inverse spectral and dynamical data: cf. \cite{B JOT}.
\smallskip

\noindent$\bullet$\,\,\,In \cite{B JOT}, p. 303, a topology (non-Hausdorff) was introduced on ${\rm At\,}\overline{I\mathfrak O}$, which separates atoms of this set.
\smallskip

\noindent$\bullet$\,\,\,It would be interesting to find out to which extent the wave model remains meaningful, if the Conditions \ref{cond compactness} and \ref{cond distance} are weakened. For example, in the case of the space with the discrete metric
$$
d(x,y)=
\begin{cases}
1, & x\not=y,\\
0, & x=y,
\end{cases}
$$
Condition \ref{cond distance} is not satisfied and we have $\tau(x,y)=2d(x,y)$. The wave model is isometric to the original ``up to a homothety''.

\end{document}